\newtheorem{theorem}{Theorem}[section]
\newtheorem{lemma}[theorem]{Lemma}
\newtheorem{cla}[theorem]{Claim}
\newtheorem{prop}[theorem]{Proposition}
\theoremstyle{definition}
\newtheorem{defn}[theorem]{Definition}
\newtheorem{rem}[theorem]{Remark}
\newcommand{\Z}{\mathbb Z}
\newcommand{\R}{\mathbb R}
\newcommand{\E}{\mathbb E}
\newcommand{\Prob}{\mathbb P}
\title{On the Manhattan pinball problem}
\author{Linjun Li
\thanks{Department of Mathematics, University of Pennsylvania, Philadelphia, PA. e-mail:
 linjun@sas.upenn.edu} 
}
\date{}
\begin{document}

\maketitle
\begin{abstract}
We consider the periodic Manhattan lattice with alternating orientations going north-south and
east-west. Place obstructions on vertices independently with probability $0<p<1$. A particle is moving on the edges with unit speed following the orientation of the lattice and it will turn only when encountering an obstruction. The problem is that for which value of $p$ is the trajectory of the particle closed almost surely. We prove this is true for $p>\frac{1}{2}-\varepsilon$ with some $\varepsilon>0$. 
\end{abstract}

\section{Introduction}
We consider the Manhattan pinball problem which is a network model of a quantum localization problem (see \cite{beamond2003quantum} and also \cite[Page 237]{spencer2012duality},\cite{cardy2010quantum}). There are equivalent ways to state the problem formally. We follow \cite[Page 238]{spencer2012duality} and state it by using bond percolation. Consider the $\Z^{2}$ lattice embedded into the plane $\R^{2}$. Denote the tilted lattice by $$\widetilde{\Z^{2}}=\left\{\left(x+\frac{1}{2},y+\frac{1}{2}\right):\text{$x,y$ are integers and $x-y$ is even}\right\},$$
and for any $a,b\in \widetilde{\Z^{2}}$, $a$ and $b$ are connected by an edge if and only if $|a-b|=\sqrt{2}$. Here, $|\cdot|$ is the Euclidean distance on $\R^{2}$.
See Figure \ref{fig:2} for an illustration.
\begin{figure}
    \centering
    \includegraphics{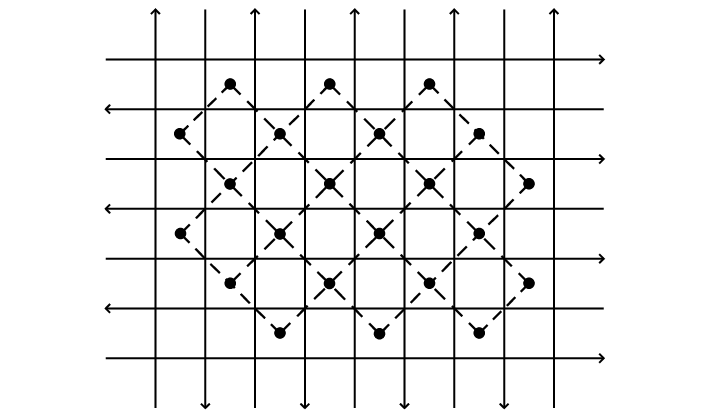}
    \caption{The black dots in the picture illustrate the lattice $\widetilde{\Z^{2}}$. The dashed lines are edges of $\widetilde{\Z^{2}}$. The arrows indicate the orientation of the Manhattan lattice.}
    \label{fig:2}
\end{figure}
Given $0<p<1$, we consider the \emph{Bernoulli bond percolation} on $\widetilde{\Z^{2}}$. We declare each edge of $\widetilde{\Z^{2}}$ to be closed with probability $p$ and open with probability $1-p$, independent of all other edges. We use $\omega$ to denote a configuration of open and closed edges and then place a two-sided plane mirror on each closed edge in $\omega$. Suppose a ray of light starts from the origin and initially moves towards one of the two directions following the Manhattan orientation. When the light reaches an open edge, it passes through without deflection. When the light reaches a closed edge, it is deflected through a right angle by the mirror on the edge. Let $L(\omega)$ denote the trajectory of the light and we view $L(w)$ as a piece-wise linear path in $\mathbb{R}^{2}$.
\begin{defn}\label{def:n_bounded}
Given a nonnegative integer $n$, denote by
$$Q_{n}=\left\{(x,y)\in \widetilde{\Z^{2}}: \text{$|x+y-1|\leq n$ and $|x-y|\leq n$}  \right\}$$
the tilted box centered at $(\frac{1}{2},\frac{1}{2})$. We say a trajectory $L(w)$ is \emph{$n$-bounded} if any point $(x,y)$ on $L(w)$ satisfies $|x+y-1|\leq n$ and $|x-y|\leq n$.
\end{defn}

Our main result is the following
\begin{theorem}\label{thm:main}
Let $\mathcal{E}_{n}$ denote the event that $L(\omega)$ is $n$-bounded. Then there exists $\varepsilon_{0}>0$ such that following holds. For each $p>\frac{1}{2}-\varepsilon_{0}$, there are $\alpha,c>0$ such that for $n>\alpha$, we have
\begin{equation}
    \Prob_{p} [\mathcal{E}_{n}]\geq 1-\exp(-c n).
\end{equation}
\end{theorem}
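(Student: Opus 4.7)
The natural framework is a multi-scale Peierls-type argument. Since $L(\omega)$ is a deterministic function of $\omega$, having bounded diameter is equivalent to the existence of some ``barrier'' in $\omega$ around the origin that no light ray can cross; my plan is to produce such a barrier at some scale $r \leq n$ with probability at least $1-e^{-cn}$.

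First, I would identify, at a scale $L$ to be determined, a local configuration of closed edges that acts as a trap: a box $B_L(v)$ of side $L$ centered at $v$ is called \emph{good} if every light trajectory entering $B_L(v)$ from outside must turn back inside before exiting on the opposite side, in every orientation. By construction the event $G_L(v) = \{B_L(v) \text{ is good}\}$ depends only on edges inside $B_L(v)$, so for a disjoint collection of such boxes the events are independent. For $p$ close to $1$, one may take $L = O(1)$: a full ring of closed edges around $v$ suffices, and occurs with high probability. For $p > \tfrac12 - \varepsilon_0$, $L$ must be taken large so that a law-of-large-numbers effect in the box makes $\Prob_p[G_L(v)] \geq 1-\delta$ with $\delta$ arbitrarily small.

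Second, I would run a block-renormalization. If $\delta < 1 - p_c^{\text{site}}(\Z^2)$, the complements of the good-box events are stochastically dominated by a subcritical Bernoulli site percolation on the coarse-grained lattice $L\Z^2$. Standard Peierls counting for subcritical circuits then implies that the probability that no concentric good-box contour encloses the origin at any scale $r\in[\tfrac{n}{2},n]$ is at most $e^{-cn}$. A good-box contour at scale $r$ acts as an impenetrable barrier, trapping $L(\omega)$ inside a region of diameter $O(r)$, which yields Theorem~\ref{thm:main} with the stated exponential rate.

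The main obstacle is the second step: showing that $\Prob_p[G_L(v)] \to 1$ as $L \to \infty$ for some $p$ slightly below $\tfrac12$. A naive Peierls contour (a solid ring of $\sim L$ mirrors) is good enough for $p$ close to $1$ but fails badly near $\tfrac12$, because any explicit contour of length $\sim L$ occurs with probability at most $(1/2)^L$. One must instead exploit the specific orientation structure of the Manhattan lattice, which heavily restricts admissible trajectories, in combination with the duality framework of \cite{spencer2012duality}. Concretely, I would try to recast ``$B_L(v)$ is not good'' as the existence of a certain crossing of $B_L(v)$ by the trajectory, and then use an RSW / sharp-threshold (OSSS) input to upgrade a positive-probability crossing estimate at $p = \tfrac12$ into the required $\Prob_p[G_L(v)] \geq 1-\delta$ for $p = \tfrac12 - \varepsilon_0$ and $L$ large. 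Producing a finite-size criterion that can actually be verified for some explicit $L_0$ is what will determine the value of $\varepsilon_0 > 0$, and is the essential work of the proof.
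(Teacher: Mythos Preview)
Your proposal outlines a reasonable-sounding renormalization scheme, but the step you yourself flag as ``the main obstacle'' is not merely technical: it is the entire content of the theorem, and the tools you suggest do not touch it. The event $G_L(v)$ that a light ray entering $B_L(v)$ is forced to turn back is \emph{not monotone} in the edge configuration---opening or closing a single edge can completely reroute the trajectory in either direction---so RSW and OSSS-type sharp-threshold inequalities, which are built for monotone Boolean functions, give no leverage on $\Prob_p[G_L(v)]$. If instead you try to work with the monotone event ``$B_L(v)$ contains a closed circuit'', then you are back to ordinary bond percolation on $\widetilde{\Z^2}$, whose sharp threshold sits exactly at $p=\tfrac12$; no finite-size criterion will let you push it below. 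In short, your plan reduces the theorem to a statement that is at least as hard as the theorem itself.

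The paper's argument is genuinely different and supplies the missing idea. It does not analyse light trajectories directly at scale $L$. Instead it uses an Aizenman--Grimmett \emph{essential enhancement} of closed-edge percolation: a specific local pattern $G_0$ is chosen so that (i) declaring one extra edge closed whenever a translate of $G_0$ appears strictly lowers the critical point for closed circuits, and (ii) the extra closed edge is placed where the light, under the original configuration, already behaves exactly as if that edge were closed (it traverses a fixed short loop and exits as though reflected). Point (i) is Grimmett's enhancement theorem and yields closed circuits in $\tilde\omega$ for all $p>\tfrac12-\varepsilon_1$; point (ii), which is the delicate combinatorial choice and has no analogue in your outline, guarantees that $L(\omega)$ and $L(\tilde\omega)$ differ only by bounded local detours, so a closed circuit in $\tilde\omega$ still traps $L(\omega)$. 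This decoupling of the percolation estimate (monotone, handled by enhancement) from the trajectory analysis (purely local, handled by the explicit choice of $G_0$) is the crux, and it is absent from your proposal.
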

Theorem \ref{thm:main} implies that almost surely, the trajectory of the light is bounded in $\R^{2}$ when $p>\frac{1}{2}-\varepsilon_{0}$. This result
is previously known to hold for $p>\frac{1}{2}$ by using bond percolation, see e.g. \cite{beamond2003quantum}, \cite[Page 238]{spencer2012duality}. It is suggested by physicists that the same happens for any $p\in (0,1)$ (see \cite{beamond2003quantum}) and the average diameter of the trajectory is conjectured to scale as $\exp(c p^{-2})$ when $p$ tends to $0$. 
Interestingly, this assertion is in agreement with the prediction that all states in the two dimensional \emph{random Schr\"{o}dinger model} are localized (see \cite{abrahams1979scaling}).
For mathematical results on 2D Anderson localization with singular potential, see e.g. \cite{carmona1987anderson},\cite{bourgain2005localization},\cite{ding2020localization},\cite{li2020anderson}. 
We also refer the reader to \cite{spencer2012duality} for a relationship between Manhattan pinball problem and the quantum localization problem. 
\section{Enhancement}
\begin{figure}
    \centering
    \includegraphics{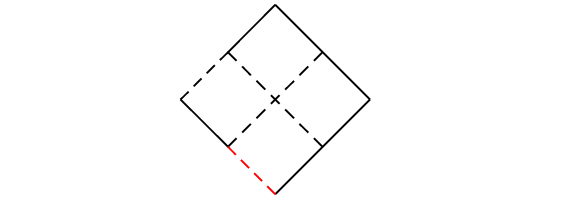}
    \caption{The figure illustrates the configuration $G_{0}$ where solid black edges are closed and dashed edges are open.}
    \label{fig:1}
\end{figure}
%In this section, we define the enhancement procedure we are going to use. 
The proof of Theorem \ref{thm:main} uses an \emph{enhancement} procedure which we define now.
Denote by $G_{0}$ the finite configuration illustrated in Figure \ref{fig:1} and its caption.
Given any configuration $\omega$, we enhance it by the following procedure. For any translated  copy of $G_{0}$ which appears in $\omega$, we close the open edge which coincides with the red edge (illustrated in Figure \ref{fig:1}). We denote the configuration after this procedure by $\tilde{\omega}$.

The procedure above is a special case of the general enhancement (see e.g. \cite[Section 3.3]{grimmett1999percolation} and \cite{aizenman1991strict}). Intuitively, we have added closed edges to $\omega$ in a systematic way and we expect that the closed edges of resulting configuration $\tilde{\omega}$ are more ``percolative'' than closed edges of $\omega$.
This is an informal statement of \cite[Theorem (3.16)]{grimmett1999percolation}. For \cite[Theorem (3.16)]{grimmett1999percolation} to hold, the enhancement is required to be \emph{essential} (a concept defined in \cite[Page 64]{grimmett1999percolation}) which is true for our case (see Figure \ref{fig:5} and its caption).

%This enhancement procedure is essential (a concept defined in \cite[Page 64]{grimmett1999percolation}) as illustrated in Figure \ref{fig:5}. We refer the reader to \cite[Section 3.3]{grimmett1999percolation} for general definitions of enhancement. 
\begin{figure}
    \centering
    \includegraphics{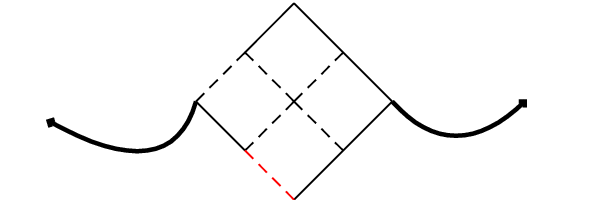}
    \caption{The picture contains a doubly-infinite closed path if and only if the enhancement is activated so that the red edge is turned to be closed. This means the enhancement is essential.}
    \label{fig:5}
\end{figure}
\begin{rem}
Note that our enhancement is not \emph{monotonic} in the sense of \cite[Section 3.3]{grimmett1999percolation}. However, \cite[Theorem (3.16)]{grimmett1999percolation} does not require monotonicity.
\end{rem}
\begin{rem}
There are other choices of the enhancement $G_{0}$ for our proof of Theorem \ref{thm:main}. We will see in Section \ref{sec:proof} that, our choice of $G_{0}$ makes the proof easier because of an important feature contained in Lemma \ref{lem:translation} below.
\end{rem}
%The following lemma can be directly checked from Figure \ref{fig:5}.
\begin{lemma}\label{lem:translation}
Suppose $\omega$ is a configuration and $\overline{G_{0}}$, $\overline{\overline{G_{0}}}$ are two different translated copies of $G_{0}$ that appear in $\omega$. Then any open edge in $\overline{G_{0}}$ does not coincide with the red edge in $\overline{\overline{G_{0}}}$.
\end{lemma}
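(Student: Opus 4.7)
The plan is to prove Lemma \ref{lem:translation} by a direct finite case analysis on the relative position of the two translated copies. Since the support of $G_{0}$ is a small finite set of edges of $\widetilde{\Z^{2}}$, there are only finitely many translation vectors $v$ for which the supports of $\overline{G_{0}}$ and $\overline{G_{0}}+v$ overlap non-trivially. Outside this finite list, the red edge of $\overline{G_{0}}+v$ is disjoint from every edge of $\overline{G_{0}}$ and the conclusion is immediate.

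First I would fix a coordinate system, place $\overline{G_{0}}$ canonically, and explicitly label each edge in its support as closed, plain-open, or red, recording in particular the location of the distinguished red edge $e_{\ast}$. Then, for each translation $v$ in the short list above, I would impose the compatibility condition required for both copies to appear simultaneously in a common $\omega$: every edge lying in the intersection of the two supports must have the same status (open or closed) in both $\overline{G_{0}}$ and $\overline{G_{0}}+v$. Because the closed-edge skeleton of $G_{0}$ is rigid on the oriented Manhattan lattice, this compatibility constraint dramatically restricts the admissible $v$. For each admissible $v$ I would then verify directly that $e_{\ast}+v$ is not among the open edges of $\overline{G_{0}}$.

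The only real obstacle is the bookkeeping of the finitely many overlap patterns, and the author indicates that the result is evident once one works in Figure \ref{fig:5}: for each admissible offset, $e_{\ast}+v$ lands either outside the support of $\overline{G_{0}}$ or on one of its closed edges, and never on an open edge. So no genuine mathematical difficulty is expected; the lemma is essentially a visual check on the specific enhancement shape $G_{0}$ chosen in Section~2, and the value of singling it out is that it guarantees the enhancement can be thought of as a local and non-overlapping rule when we invoke it later in the proof of Theorem \ref{thm:main}.
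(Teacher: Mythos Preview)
Your proposal is correct and matches the paper's approach: the paper simply states that the lemma ``can be directly checked from Figure~\ref{fig:5}'', and your finite case analysis over the translation vectors with overlapping support is exactly how one would carry out that check in detail. There is no additional idea in the paper beyond the visual verification you describe.
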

\begin{proof}
For simplicity,
we first label six edges $e_{i}$ ($0\leq i\leq 5$) in $G_{0}$ as illustrated in Figure \ref{fig:labeling-edges} and its caption. 
\begin{figure}
    \centering
    \includegraphics{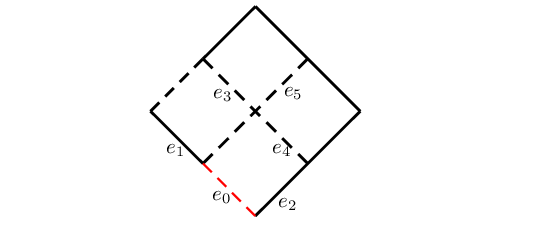}
    \caption{Illustration of the labelling of six edges $e_{i}$ ($0\leq i\leq 5$) in $G_{0}$ where $e_{0}$ is the red edge. $e_{1}$ and $e_{2}$ are closed while $e_{3}$, $e_{4}$, $e_{5}$ and $e_{0}$ are open.}
    \label{fig:labeling-edges}
\end{figure}
For $0\leq i\leq 5$,
denote by $\overline{e_{i}}$ the translation of $e_{i}$ in $\overline{ G_{0}}$ and denote by $\overline{\overline{e_{i}}}$ the translation of $e_{i}$ in $\overline{\overline{ G_{0}}}$.

Assume the red edge in $\overline{\overline{G_{0}}}$ (i.e. $\overline{\overline{e_{0}}}$) coincides with an open edge in $\overline{G_{0}}$ and we will arrive at contradiction. Firstly, by the direction of edge $\overline{\overline{e_{0}}}$ and since $\overline{\overline{e_{0}}}$ is open, we must have $\overline{\overline{e_{0}}}=\overline{e_{0}}$ or
$\overline{\overline{e_{0}}}=\overline{e_{3}}$ or $\overline{\overline{e_{0}}}=\overline{e_{4}}$. Since $\overline{G_{0}}\neq \overline{\overline{G_{0}}}$, we have $\overline{\overline{e_{0}}}\neq\overline{e_{0}}$.
If $\overline{\overline{e_{0}}}=\overline{e_{3}}$, then we have $\overline{\overline{e_{2}}}=\overline{e_{5}}$ which contradicts with the fact that $\overline{\overline{e_{2}}}$ is closed and $\overline{e_{5}}$ is open. If $\overline{\overline{e_{0}}}=\overline{e_{4}}$, then we have $\overline{\overline{e_{1}}}=\overline{e_{3}}$ which contradicts with the fact that $\overline{\overline{e_{1}}}$ is closed and $\overline{e_{3}}$ is open. Thus we arrive at contradiction and our lemma follows.
%For each integer $1\leq i \leq 5$, when we say $e_{i}$ in $\overline{G_{0}}$ (or \overline{G_{0}}), we mean the translated copy of edge $e_{i}$
\end{proof}
\begin{defn}
For each positive integer $n$,
let $A_{n}$ denote the event that there exists a path of closed edges joining $(\frac{1}{2},\frac{1}{2})$ to some vertex in $\partial Q_{n}$ where $\partial Q_{n}=Q_{n}\setminus Q_{n-1}$.
\end{defn}
The following proposition follows from the proof of \cite[Theorem (3.16)]{grimmett1999percolation} and the fact that the enhancement which we constructed is essential. For more details on adapting arguments in the proof of \cite[Theorem (3.16)]{grimmett1999percolation} to prove Proposition \ref{prop:event-A_n}, see Appendix \ref{app:enh}.
\begin{prop}[Theorem (3.16) in \cite{grimmett1999percolation}]\label{prop:event-A_n}
There exists $\varepsilon_{1}>0$ such that, for each $p>\frac{1}{2}-\varepsilon_{1}$,
\begin{equation}
    \Prob_{p}[\{\omega : \tilde{\omega}\in A_{n}\}] \geq \Prob_{\frac{1}{2}+\varepsilon_{1}}[A_{n}]
\end{equation}
for large enough $n$.
\end{prop}
%which denotes the crossing event in a $4n\times n$ tilted rectangle.
\begin{defn}
For any positive integer $n$,
let $A'_{n}$ denote the event that there is a path of closed edges in the tilted rectangle
$$T_{n}=\{(x,y)\in \widetilde{\Z^{2}}:1\leq x+y-1\leq n, |x-y| \leq 2n \}$$ joining some vertex on northwest side $\left\{(x,y)\in T_{n}: x-y= -2n\right\}$ to some vertex on southeast side $\left\{(x,y)\in T_{n}: x-y= 2n \right\}$. See Figure \ref{fig:3} for an illustration.
\end{defn}
In fact, by the same argument in Appendix \ref{app:enh} (see Remark \ref{rem:A'_n_case}), we can substitute $A_{n}$ in Proposition \ref{prop:event-A_n} by $A'_{n}$ and we have
\begin{prop}\label{prop:event-A'_n}
There exists $\varepsilon_{1}>0$ such that, for each $p>\frac{1}{2}-\varepsilon_{1}$,
\begin{equation}
    \Prob_{p}[\{\omega : \tilde{\omega}\in A'_{n}\}] \geq \Prob_{\frac{1}{2}+\varepsilon_{1}}[A'_{n}]
\end{equation}
for large enough $n$. 
\end{prop}
The following well-known lemma states that in the supercritical phase (i.e. $p>\frac{1}{2}$), the crossing event $A'_{n}$ happens with high probability. 
\begin{lemma}\label{lem:prob-crossing}
For any $p>\frac{1}{2}$, there is constant $c>0$ such that $$\Prob_{p}[A'_{n}]\geq 1-\exp(-c n)$$ for large enough $n$.
\end{lemma}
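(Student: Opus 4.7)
The plan is to reduce the statement to the classical fact that supercritical Bernoulli bond percolation on the square lattice crosses long rectangles in the easy direction with overwhelming probability. First I would observe that $\widetilde{\Z^{2}}$ is, up to a rotation by $45^{\circ}$ and a rescaling, isomorphic to $\Z^{2}$, so bond percolation on it has critical parameter $p_{c}=\frac{1}{2}$; the rectangle $T_{n}$ has short dimension $n$ (along $x+y$) and long dimension $4n$ (along $x-y$), and $A'_{n}$ is the crossing event in the long direction, which is the easy direction.

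The next step is planar duality. The matching dual lattice of $\widetilde{\Z^{2}}$ is itself a translated copy of $\widetilde{\Z^{2}}$, with each dual edge declared \emph{present} precisely when the primal edge it crosses is \emph{open}. A standard topological argument then shows that $A'_{n}$ fails if and only if there is a path of present dual edges inside (a slight thickening of) $T_{n}$ joining the northeast long side to the southwest long side, i.e.\ a dual path crossing $T_{n}$ in the short direction. Under $\Prob_{p}$ each dual edge is present with probability $1-p<\frac{1}{2}$, so the dual percolation is strictly subcritical.

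At this point I would invoke the classical exponential decay of the radius of subcritical Bernoulli clusters (for $\Z^{2}$ due to Kesten, or via Menshikov/Aizenman--Barsky in general): there exists $c_{0}=c_{0}(p)>0$ such that for any vertex $v$,
\begin{equation*}
\Prob_{1-p}\bigl[\text{the present-edge cluster of $v$ has radius}\geq n\bigr]\leq \exp(-c_{0}n).
\end{equation*}
Any dual crossing of $T_{n}$ in the short direction has diameter at least $n$ and must touch at least one vertex on the southwest long side, of which there are $O(n)$. A union bound then gives
\begin{equation*}
\Prob_{p}\bigl[\overline{A'_{n}}\bigr]\leq C\,n\exp(-c_{0}n)\leq \exp(-cn)
\end{equation*}
for any $c<c_{0}$ and all sufficiently large $n$, which is the claimed bound.

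I do not expect a genuine obstacle: the argument is a textbook application of duality together with subcritical exponential decay. The only points that need a little care are (i) checking that the tilted lattice $\widetilde{\Z^{2}}$ really is a rotated $\Z^{2}$ so that the standard $p_{c}=\frac{1}{2}$ and duality results apply verbatim, and (ii) ensuring that the union bound is taken over starting points on a single long side so that the dual crossing, which has diameter at least the short-side length $n$, contributes the full exponential factor.
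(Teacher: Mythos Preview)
Your argument is correct and is exactly the standard textbook approach: duality on the (rotated) square lattice converts the failure of the long-direction closed crossing into a short-direction open (dual) crossing, and subcritical exponential decay together with a union bound over the $O(n)$ starting vertices on one long side yields the bound. The paper itself does not spell this out but simply refers to the argument preceding \cite[Lemma (11.22)]{grimmett1999percolation}, which is precisely this duality-plus-exponential-decay reasoning, so your proposal and the paper's proof coincide.
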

\begin{proof}
Follow the argument prior to \cite[Lemma (11.22)]{grimmett1999percolation}.
\end{proof}
Let $A''_{n}$ denote the event that $Q_{n}$ lies in the interior of a circuit which consists of closed edges in $Q_{2n}$.
Together with Proposition \ref{prop:event-A'_n} and Lemma \ref{lem:prob-crossing}, we have
\begin{figure}
    \centering
    \includegraphics{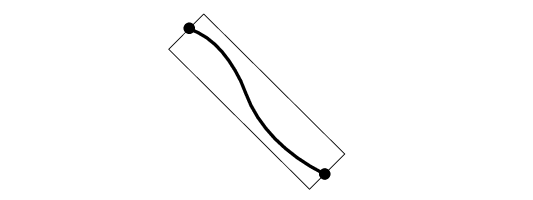}
    \caption{The picture illustrates a closed path in $T_{n}$ which joins some vertex on northwest side to southeast side.}
    \label{fig:3}
\end{figure}
\begin{prop}\label{prop:circuit}
There exists $\varepsilon_{1},c_{1}>0$ such that, for each $p>\frac{1}{2}-\varepsilon_{1}$,
\begin{equation}
    \Prob_{p}[\{\omega : \tilde{\omega}\in A''_{n}\}] \geq 1-\exp(-c_{1} n)
\end{equation}
for large enough $n$. 
\end{prop}
\begin{proof}
Consider the following four tilted rectangles contained in $Q_{2n}$:
\begin{enumerate}
    \item $T^{(1)}=\{(x,y)\in \widetilde{\Z^{2}}: n+1\leq x+y-1\leq 2n, |x-y|\leq 2n\}$,
    \item $T^{(2)}=\{(x,y)\in \widetilde{\Z^{2}}: -2n\leq x+y-1\leq -n-1, |x-y|\leq 2n\}$,
    \item  $T^{(3)}=\{(x,y)\in \widetilde{\Z^{2}}:n+1 \leq x-y\leq 2n,  |x+y-1|\leq 2n\}$,
    \item  $T^{(4)}=\{(x,y)\in \widetilde{\Z^{2}}:-2n\leq x-y\leq -n-1, |x+y-1|\leq 2n\}$. 
\end{enumerate}
See Figure \ref{fig:4} for an illustration. If these four tilted rectangles are crossed in the `long direction' by closed paths as indicated in Figure \ref{fig:4}, then $Q_{n}$ lies in the interior of a closed circuit. By symmetry, each of these four crossing events has the same probability as $A'_{n}$.
Thus our proposition follows from Proposition \ref{prop:event-A'_n} and Lemma \ref{lem:prob-crossing}.
\end{proof}
\begin{figure}
    \centering
    \includegraphics{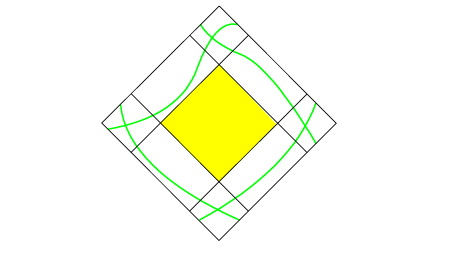}
    \caption{The four green lines are four closed paths that cross tilted rectangles $T^{(i)}(i=1,2,3,4)$ in $Q_{2n}$. If such paths exist, then $Q_{n}$ (the yellow region) lies in the interior of a closed circuit.}
    \label{fig:4}
\end{figure}
\section{Proof of Theorem \ref{thm:main}}\label{sec:proof}
\begin{defn}
Denote by $\widetilde{\E^{2}}$ the set of edges of $\widetilde{\Z^{2}}$. Given an edge $e\in \widetilde{\E^{2}}$ connecting $x,y \in \widetilde{\Z^{2}}$, we also think of $e$ as the segment in $\R^{2}$ whose end points are $x,y$. Given a configuration $\omega$ and $e\in \widetilde{\E^{2}}$,
we say $L(\omega)$ \emph{intersects} $e$ (or equivalently, $e$ intersects $L(\omega)$) if $L(\omega)$ (as a path) intersects $e$ (as a segment).
For any $e\in \widetilde{\E^{2}}$, we say $e$ is \emph{inside} $Q_{n}$ if $e$ connects two vertices in $Q_{n}$, otherwise we say it is \emph{outside} $Q_{n}$. 
\end{defn}

\begin{proof}[Proof of Theorem \ref{thm:main}]

By Proposition \ref{prop:circuit}, it suffices to prove $\{\omega : \tilde{\omega}\in A''_{n}\}\subset \mathcal{E}_{2n+10}$ for $n>100$. Thus we fix an $\omega$ and suppose $\tilde{\omega}\in A''_{n}$. Our goal is to prove $\omega\in \mathcal{E}_{2n+10}$. Let configuration $\omega_{0}$ be obtained by letting $\omega_{0}=\omega$ inside $Q_{100}$ and $\omega_{0}=\tilde{\omega}$ outside $Q_{100}$. Recall that $L(\omega_{0})$ is the trajectory of the light under the configuration $\omega_{0}$. By definition of $A''_{n}$ and $n>100$, the closed circuit of $\omega_{0}$ in $Q_{2n}\setminus Q_{n-1}$ gives rise to an enclosure of mirrors surrounding the origin and thus traps the light. Hence, $L(\omega_{0})$ is $2n$-bounded.

Note that, for each open edge $e$ in $\omega$,  $e$ is closed in the configuration $\omega_{0}$ if and only if
\begin{enumerate}
    \item $e$ is outside $Q_{100}$,
    \item $e$ is at the position of the (translated) red edge in a translated  copy of $G_{0}$.
\end{enumerate}

We denote by $E\subset \widetilde{\E^{2}}$ the set of edges  which are open in $\omega$ but are closed in $\omega_{0}$. 
\begin{figure}
    \centering
    \includegraphics{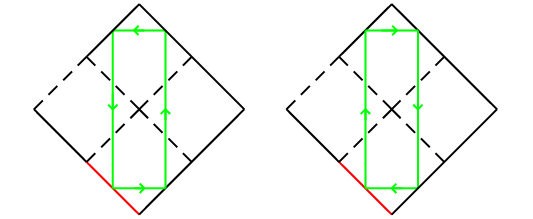}
    \caption{
    $e_{*}\in E$ is a (translated) red edge which is open under $\omega$. After enhancement, $e_{*}$ is turned to be closed. After closing $e_{*}$, the light comes from the north of $e_{*}$ will travel along the green loop (see the first figure); and the light comes from the east of $e_{*}$ will travel along the \emph{inverse} of green loop (see the second figure).}
    \label{fig:green-loop}
\end{figure}
\begin{cla}\label{cla:direction-of-light}
Suppose $e_{*}\in E$ and the light following the trajectory $L(\omega_{0})$ approaches $e_{*}$. Then the light must approach $e_{*}$ from the west or south.
\end{cla}
\begin{proof}
Let $\overline{G_{0}}$ be the translated copy of $G_{0}$ which appears in $\omega$ such that $e_{*}$ is at the position of the red edge in $\overline{G_{0}}$. By definition of $E$, $e_{*}$ is open in $\omega$ but closed in $\omega_{0}$. By Lemma \ref{lem:translation}, among all edges in $\overline{G_{0}}$, only $e_{*}$ belongs to $E$. Thus, except $e_{*}$, other edges in $\overline{G_{0}}$ have the same open-closed status in $\omega$ and $\omega_{0}$. Assume our claim is not true, then the light comes from the east or north of $e_{*}$. Then $L(\omega_{0})$ must be the green loop or its inverse as in Figure \ref{fig:green-loop} and its caption. Observe that, for any two points $a_{1},a_{2}\in \R^{2}$ on the green loop (or its inverse), we have $|a_{1}-a_{2}|\leq \sqrt{10}$. Since $L(\omega_{0})$ starts from the origin, any point $(x,y)$ on $L(\omega_{0})$ satisfies $|x+y-1|\leq 2\sqrt{10}+1<10$ and $|x-y|\leq 2\sqrt{10}<10$. We deduce from Definition \ref{def:n_bounded} that $L(\omega_{0})$ is $10$-bounded. Since $e_{*}$ intersects $L(\omega_{0})$, $e_{*}$ is inside $Q_{11}$. This contradicts with the fact that edges in $E$ are outside $Q_{100}$.
%Here, we used the fact that the edges in $E$ are outside $Q_{100}$ and thus if the light comes to some $e\in E$, it must come from west or south. 
\end{proof}
We now start from $\omega_{0}$ and open edges in $E$ one by one to reach the initial configuration $\omega$ and we keep track of the trajectory. Suppose $e_{1},e_{2},\cdots,e_{k}$ are all the edges in $E$ which intersect $L(\omega_{0})$. For $1\leq i\leq k$, let $\omega_{i}$ be obtained from $\omega_{0}$ by opening edges $e_{1},e_{2},\cdots,e_{i}$. 
\begin{figure}
    \centering
    \includegraphics{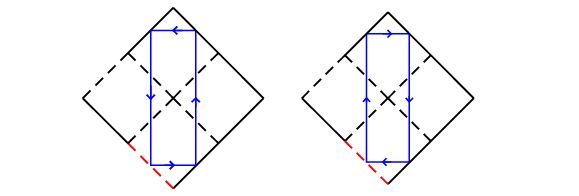}
    \caption{After opening the red edge of $G_{0}$, the light comes from the west of the red edge will travel an extra blue path (see the first figure); and the light comes from the south of the red edge will travel an extra path which is the inverse of blue path (see the second figure). After travelling the extra path, the light will continue along its original trajectory (i.e. the trajectory when the red edge was still closed).}
    \label{fig:blue-paths}
\end{figure}
\begin{cla}\label{cla:attach-blue-path}
Suppose $0\leq i\leq k-1$. Then
$L(\omega_{i+1})$ is obtained from $L(\omega_{i})$ by attaching a translated copy of the blue path (the first figure in Figure \ref{fig:blue-paths}) or its inverse (the second figure in Figure \ref{fig:blue-paths}). Moreover, this newly attached path intersects $e_{i+1}$ but does not intersect other edges in $E$.
\end{cla}
\begin{proof}
We prove by induction on $i$. For $i=0$, let $\overline{G_{0}}$ be the translated copy of $G_{0}$ which appears in $\omega$ and has translated red edge $e_{1}$. By Lemma \ref{lem:translation}, among edges of $\overline{G_{0}}$, only $e_{1}$ belongs to $E$ and other edges do not belong to $E$. Thus
the enhancement will only turn $e_{1}$ to be closed and will not affect other edges in $\overline{G_{0}}$. Suppose a light is traveling along $L(\omega_{0})$. By Claim \ref{cla:direction-of-light}, the light will approach $e_{1}$ from the west or south. Now, under configuration $\omega_{0}$, we open the edge $e_{1}$ and consider how the trajectory of light is affected. If the light comes from the west of $e_{1}$, it will travel an extra path which is a translated copy of the blue path (the first figure in Figure \ref{fig:blue-paths}); if the light comes from the south of $e_{1}$, it will travel an extra translated copy of the inverse of blue path (the second figure in Figure \ref{fig:blue-paths}). Thus $L(\omega_{1})$ is obtained from $L(\omega_{0})$ by attaching a translated copy of the blue path (or its inverse) which intersects $e_{1}$ but does not intersect other edges in $E$. 

Assume our claim holds for $i<k-1$ and we prove it for $i+1$. By inductive hypothesis, $L(\omega_{i+1})$ is obtained from $L(\omega_{0})$ by attaching $i+1$ translated copies of blue path (or its inverse). Thus the light traveling along $L(\omega_{i+1})$ will still approach $e_{i+2}$. By the same argument as in the proof of Claim \ref{cla:direction-of-light}, the light must approach $e_{i+2}$ from the west or south. By the same argument as in the case $i=0$, after opening $e_{i+2}$, the light will travel an extra path which is a translated copy of the blue path (or its inverse). Thus $L(\omega_{i+2})$ is obtained from $L(\omega_{i+1})$ by attaching a translated copy of the blue path (or its inverse) which intersects $e_{i+2}$ but does not intersect other edges in $E$. By induction principle, our claim follows.
\end{proof}
%Suppose a light travels along $L(\omega_{i})$. Then by Claim \ref{cla:direction-of-light}, the light must come from the west or south of $e_{i+1}$. After opening the edge $e_{i+1}$, the light will travel an extra path which is a translated copy of the blue path (the second figure in Figure \ref{fig:1}) or its inverse (the third figure in Figure \ref{fig:1}). Thus $L(\omega_{i+1})$ is obtained from $L(\omega_{i})$ by attaching a translated copy of the blue path or its inverse.
\begin{cla}\label{cla:one-by-one}
For each $1\leq i\leq k$
and each $e\in E$, if $e$ intersects $L(\omega_{i})$, then $e\in\{e_{1},e_{2},\cdots,e_{k}\}$. In particular, we have $L(\omega_{k})=L(\omega)$.
\end{cla}
\begin{proof}
By Claim \ref{cla:attach-blue-path}, $L(\omega_{i})$ is obtained from $L(\omega_{0})$ by attaching $i$ translated copies of the blue path or its inverse. If $e\in E$ intersects $L(\omega_{i})$, then either $e$ intersects $L(\omega_{0})$ or $e$ intersects those $i$ newly attached translated copies of blue path (or its inverse).
If $e\in E$ intersects $L(\omega_{0})$, then $e\in \{e_{1},e_{2},\cdots,e_{k}\}$ by definition of $E$. Otherwise, suppose $e$ intersects a newly attached translated copy of the blue path (or its inverse). By Claim \ref{cla:attach-blue-path}, this newly attached path intersects only one edge in $E$ which is one of $e_{i}$'s. Thus we have $e\in \{e_{1},e_{2},\cdots,e_{k}\}$.
%   which we denote by $\iota$. %Since the blue path (or its inverse) is always contained in a translated copy of $G_{0}$ 
%Then there is a translated copy of $G_{0}$, say $\overline{G_{0}}$, which contains $\iota$ and thus contains $e$ as an edge. By Lemma \ref{lem:translation}, since $e\in E$, $e$ must be the (translated) red edge in $\overline{G_{0}}$.

%By Lemma \ref{lem:translation}, if $\overline{G_{0}}$ is a translated  copy of $G_{0}$ which appears in $\omega$, then among the edges in $\overline{G_{0}}$, only the (translated) red edge belongs to $E$ and other edges in $\overline{G_{0}}$ do not belong to $E$.  then $e\in \{e_{1},e_{2},\cdots,e_{i}\}$. Thus if $e\in E$ intersects $L(\omega_{i})$, then $e\in \{e_{1},e_{2},\cdots,e_{k}\}$.

Taking $i=k$, we deduce that for any edge $e$ in $E$, if $e$ intersects $L(\omega_{k})$, then $e\in \{e_{1},e_{2},\cdots,e_{k}\}$. Thus edges in $E\setminus \{e_{1},e_{2},\cdots,e_{k}\}$ do not intersect $L(\omega_{k})$ and opening these edges will not affect the original trajectory of light $L(\omega_{k})$. % By definition of $\omega_{k}$, $e$ is open under $\omega_{k}$. 
Since $\omega$ is obtained from $\omega_{k}$ by 
opening edges in $E\setminus \{e_{1},e_{2},\cdots,e_{k}\}$, we have $L(\omega_{k})=L(\omega)$. 
\end{proof}
%Only those edges in $E$ which intersect $L(\omega_{0})$ will affect the trajectory. Suppose $e\in E$ intersects $L(\omega_{0})$. By Claim \ref{cla:direction-of-light}, the light following $L(\omega_{0})$ must come from the west or south of $e$. After opening the edge $e$, the light will travel an extra path which is a translated copy of the blue path (the second figure in Figure \ref{fig:1}) or its inverse (the third figure in Figure \ref{fig:1}). Thus, opening edges in $E$ will affect $L(\omega_{0})$ by attaching translated copies of the blue path or its inverse to $L(\omega)$ (see Figure \ref{fig:6}). Since $L(\omega_{0})$ is $2n$-bounded, $L(\omega)$ is ``trapped'' inside $Q_{2n+10}$ and our theorem follows.
\begin{figure}
    \centering
    \includegraphics{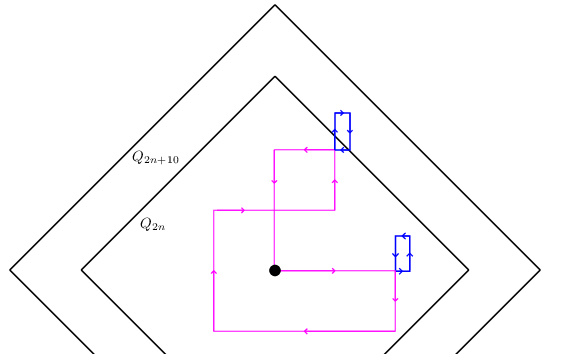}
    \caption{$L(\omega)$ is obtained from $L(\omega_{0})$ (the pink path) by attaching translated copies of the blue path (the first figure in Figure \ref{fig:blue-paths}) or its inverse (the second figure in Figure \ref{fig:blue-paths}) to $L(\omega_{0})$.}
    \label{fig:6}
\end{figure}
Finally, by Claim \ref{cla:one-by-one} and Claim \ref{cla:attach-blue-path}, $L(\omega)=L(\omega_{k})$ is obtained from $L(\omega_{0})$ by attaching $k$ translated copies of the blue path (or its inverse) to $L(\omega_{0})$ (see Figure \ref{fig:6}). Here, we used the fact that each newly attached path intersects one of $e_{i}$'s and $L(\omega_{0})$ intersects each of $e_{i}$'s.
By Figure \ref{fig:blue-paths}, observe that, for any two points $a_{1},a_{2}\in \R^{2}$ on the blue path (or its inverse), we have $|a_{1}-a_{2}|\leq \sqrt{10}$. Thus for any point $(x,y)\in \R^{2}$ on $L(\omega)$, there is a point $(x',y')\in \R^{2}$ on $L(\omega_{0})$ such that $|(x,y)-(x',y')|\leq \sqrt{10}$. Since $L(\omega_{0})$ is $2n$-bounded, we have
\begin{equation}\label{eq:check_bdd_1}
    |x+y-1|\leq |x'+y'-1|+|x-x'|+|y-y'|\leq |x'+y'-1|+2\sqrt{10} \leq 2n+2\sqrt{10}
\end{equation}
and 
\begin{equation}\label{eq:check_bdd_2}
    |x-y|\leq |x'-y'|+|x-x'|+|y-y'|\leq |x'-y'|+2\sqrt{10}\leq 2n+2\sqrt{10}.
\end{equation}
Here, the last inequalities in \eqref{eq:check_bdd_1} and \eqref{eq:check_bdd_2} are by the definition of $2n$-bounded (see Definition \ref{def:n_bounded}). 
Since $2n+2\sqrt{10} < 2n+10$, we have $L(\omega)$ is $(2n+10)$-bounded and our theorem follows.
\end{proof}
\begin{appendix}
\section{Essential enhancement}\label{app:enh}
We give here a sketch of the proof of Proposition \ref{prop:event-A_n} and the same argument works for the proof of Proposition \ref{prop:event-A'_n} (see Remark \ref{rem:A'_n_case}). We follow the argument in \cite[Section 3.3]{grimmett1999percolation} and adapt notations in \cite[Section 3.3]{grimmett1999percolation} while keeping in mind that our enhancement is to add more closed edges rather than open edges. 

Denote by $\Omega$ the edge configuration space. Each configuration $\omega\in \Omega$ can be viewed as a function $\omega:\widetilde{\E^{2}} \rightarrow \{0,1\}$ such that $\omega(e)=1$ if $e$ is closed under $\omega$ and $\omega(e)=0$ otherwise. For any $e\in \widetilde{\E^{2}}$ and $x\in \widetilde{\Z^2}$, we denote by $e+x$ the translate of the edge $e$ by the vector $x\in \widetilde{\Z^{2}}$; similarly, for any subgraph $G$ of $\widetilde{\Z^{2}}$, let $G+x$ denote the translate by $x$ of $G$.

Let $\mathcal{G}$ be the set of all simple graphs on the vertex set $Q:=Q_{1}$. For any configuration $\omega$, let $\omega_{Q} \in \mathcal{G}$ be the graph on $Q$ with edge set $$\{e \in \widetilde{\E^2}:e \text{ is inside $Q$ and } \omega(e)=1\}.$$ Let $\mu_{tri} \in \mathcal{G}$ be the graph on $Q$ with empty edge set. 
Let $\mu_{*},\mu_{**} \in \mathcal{G}$ be the graphs defined in Figure \ref{fig:enh-function} and its caption.
Let $F:\mathcal{G}\rightarrow \mathcal{G}$ be the \emph{enhancement function} defined as follows:
\begin{equation}
    F(\mu)=
    \begin{cases}
    \mu_{**} &\text{if } \mu=\mu_{*}\\
    \mu_{tri} &\text{otherwise}.
    \end{cases}
\end{equation}
For each $x\in \widetilde{\Z^{2}}$ and $\omega\in \Omega$, we write $F(x,\omega)=F((\tau_{x}\omega)_{Q})$ where $\tau_{x}$ is the shift operator on $\Omega$ given by $\tau_{x}\omega(e)=\omega(e+x)$. 
Let $\Xi=\{0,1\}^{\widetilde{\Z^{2}}}$. For each $\omega\in \Omega$ and $\xi \in \Xi$, denote the enhanced configuration by (more precisely, the graph of closed edges under the enhanced configuration)
\begin{equation}\label{eq:define-enh}
    G^{\textbf{enh}}(\omega,\xi)=G(\omega) \cup \left\{\bigcup_{x:\xi(x)=1} \{x+F(x,\omega)\} \right\}
\end{equation}
where $G(\omega)$ is the graph of closed edges under $\omega$. In writing the union of graphs, we mean the graph with vertex set $\widetilde{\Z^{2}}$ having the union of the appropriate edge sets; wherever this union contains two or more edges between the same pair of vertices, these edges coalesce into a single edge.

Thus we have associated with each pair $(\omega,\xi)\in \Omega \times \Xi$ an enhanced graph $G^{\textbf{enh}}(\omega,\xi)$.
We consider making an enhancement stochastically at each vertex $x\in \widetilde{\Z^{2}}$. 
Suppose $p,s\in [0,1]$. For each $e\in \widetilde{\E^{2}}$ and each $x\in \widetilde{\Z^{2}}$, we declare $\omega(e)=1$ with probability $p$ and declare $\xi(x)=1$ with probability $s$, independent of all other edges and vertices. This endows the space $\Omega \times \Xi$ with the product probability measure $\Prob_{p,s}$. 
\begin{figure}
    \centering
    \includegraphics{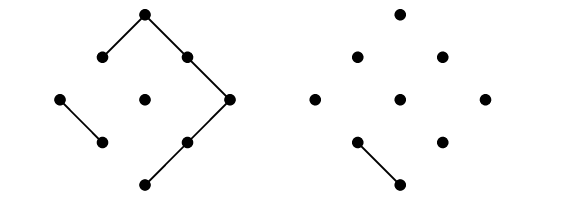}
    \caption{The left hand side illustrates graph $\mu_{*}$ and the right hand side illustrates graph $\mu_{**}$. In both graphs, the black dots illustrate vertices in $Q=Q_{1}$ and the segments illustrate the edges.}
    \label{fig:enh-function}
\end{figure}
\begin{proof}[Proof of Proposition \ref{prop:event-A_n}]
Given two configurations $\omega,\omega' \in \Omega$, we write $\omega \leq \omega'$ if $\omega\leq \omega'$ as functions in $\{0,1\}^{\widetilde{\E^2}}$.
Since $A_{n}\subset \{\omega : \tilde{\omega}\in A_{n}\}$, we have $\Prob_{p}[\{\omega : \tilde{\omega}\in A_{n}\}] \geq \Prob_{p}[A_{n}]$ for each $p\in (0,1)$. Thus for $p\geq \frac{1}{2}+\varepsilon_{1}$, we have 
\begin{equation}
    \Prob_{p}[\{\omega : \tilde{\omega}\in A_{n}\}] \geq \Prob_{p}[A_{n}] \geq \Prob_{\frac{1}{2}+\varepsilon_{1}}[A_{n}]
\end{equation}
where the second inequality is because $A_{n}$ is an increasing event. i.e. For any two configurations $\omega\leq \omega'$ such that $\omega\in A_{n}$, we have $\omega'\in A_{n}$. Thus it suffices to prove our proposition under the condition $p\in (\frac{1}{2}-\varepsilon_{1},\frac{1}{2}+\varepsilon_{1})$.

Let $B_{n}\subset \Omega\times \Xi$ denote the event that $(\frac{1}{2},\frac{1}{2})$ is connected to $\partial Q_{n}$ in the graph $G^{\textbf{enh}}(\omega,\xi)$. Define $\xi_{0},\xi_{*}\in \Xi$ by letting $\xi_{0}(x)=0$ and $\xi_{*}(x)=1$ for each $x\in \widetilde{\Z^{2}}$. Then the edges of $G^{\textbf{enh}}(\omega,\xi_{0})$ are exactly the closed edges of $\omega$ and the edges of $G^{\textbf{enh}}(\omega,\xi_{*})$ are closed edges of $\tilde{\omega}$.
Thus $\omega\in A_{n}$ is equivalent to $(\omega,\xi_{0})\in B_{n}$ and $\tilde{\omega}\in A_{n}$ is equivalent to $(\omega,\xi_{*})\in B_{n}$. This implies 
\begin{equation}
    \Prob_{p}[A_{n}]=\Prob_{p,0}[B_{n}]
\end{equation}
and
\begin{equation}
    \Prob_{p}[\{\omega:\tilde{\omega}\in A_{n}\}]=\Prob_{p,1}[B_{n}].
\end{equation}
Hence, it suffices to prove that, there is $\varepsilon_{1}>0$ such that
\begin{equation}\label{eq:goal}
    \Prob_{p,1}[B_{n}] \geq \Prob_{\frac{1}{2}+\varepsilon_{1},0}[B_{n}]
\end{equation}
for each $p\in (\frac{1}{2}-\varepsilon_{1},\frac{1}{2}+\varepsilon_{1})$ and large enough $n$.
Denote $\theta_{n}(p,s)=\Prob_{p,s}(B_{n})$ for each $p,s\in [0,1]$. %By an extension of Russo's formula (see \cite[Equation (3.17)]{grimmett1999percolation}), we have
By adapting the same argument in the proof of \cite[Theorem (3.16)]{grimmett1999percolation} (from Page 67 to Page 71 in \cite{grimmett1999percolation}), we deduce \cite[equation (3.25)]{grimmett1999percolation}. Namely, there exists a continuous function $\nu : (0,1)\times (0,1) \rightarrow (0,\infty)$ such that 
\begin{equation}\label{eq:critical-ine}
    \frac{\partial \theta_{n}}{\partial p} (p,s) \leq \nu(p,s) \frac{\partial \theta_{n}}{\partial s} (p,s)
\end{equation}
for each $(p,s)\in (0,1)\times (0,1)$ and each $n>n_{0}$ where $n_{0}$ is a constant independent of $p,s$. Now we pick $\eta \in (0,\frac{1}{10})$ and choose $K>1$ such that $\nu(p,s)\leq K$ on $[\eta,1-\eta]^{2}$. Let $\varepsilon_{1}>0$ be small enough such that $\frac{1}{2}+\varepsilon_{1} K <1-\eta$. Denote function $\phi(p)=\frac{1}{2}-K (p-\frac{1}{2})$. Suppose $n>n_{0}$. Then for $p\in [\frac{1}{2}-\varepsilon_{1},\frac{1}{2}+\varepsilon_{1}]$, we have $(p,\phi(p))\in [\eta,1-\eta]^{2}$ and $\nu(p,\phi(p))\leq K$. Since $\theta_{n}(p,s)$ is a polynomial of $p,s$, we have for $p\in (\frac{1}{2}-\varepsilon_{1},\frac{1}{2}+\varepsilon_{1})$,
\begin{align}
\begin{split}\label{eq:decreasing-theta}
    \frac{d}{d p} \theta_{n}(p,\phi(p))=&\frac{\partial \theta_{n}}{\partial p} (p,\phi(p)) -K \frac{\partial \theta_{n}}{\partial s} (p,\phi(p))\\
        &\leq (\nu(p,\phi(p))-K) \frac{\partial \theta_{n}}{\partial s} (p,\phi(p))\\
        &\leq 0.
\end{split}
\end{align}
Here, we also used the fact that $\frac{\partial \theta_{n}}{\partial s}(p,s)\geq 0$ for $p,s\in (0,1)$ (which is a direct consequence of \cite[equation (3.17)]{grimmett1999percolation}). By \eqref{eq:decreasing-theta}, $\theta_{n}(p,\phi(p))$ is non-increasing in $p$ for $p\in [\frac{1}{2}-\varepsilon_{1},\frac{1}{2}+\varepsilon_{1}]$. Thus we have 
\begin{equation}
    \theta_{n}\left(\frac{1}{2}+\varepsilon_{1},0\right)\leq \theta_{n}\left(\frac{1}{2}+\varepsilon_{1},\phi\left(\frac{1}{2}+\varepsilon_{1}\right)\right)\leq  \theta_{n}(p,\phi(p)) \leq \theta_{n}(p,1) 
\end{equation}
for $p\in (\frac{1}{2}-\varepsilon_{1},\frac{1}{2}+\varepsilon_{1})$. Here, the first and third inequality is because $\theta_{n}(p,s)$ is non-decreasing in $s$ (since $\frac{\partial \theta_{n}}{\partial s}(p,s)\geq 0$ for $p,s\in (0,1)$). Hence, \eqref{eq:goal} holds and our proposition follows.
\end{proof}
\begin{rem}\label{rem:A'_n_case}
In order to adapt the above argument to  prove Proposition \ref{prop:event-A'_n}, substitute $A_{n}$ by $A'_{n}$ and substitute $B_{n}$ by $B'_{n}$ where $B'_{n}$ denotes the following event: under the graph $G^{\textbf{enh}}(\omega,\xi)$, there is a path inside $T_{n}$ that joins the northwest side of $T_{n}$ to the southeast side of $T_{n}$. In order to achieve \eqref{eq:critical-ine}, adapt the argument from Page 67 to Page 71 in \cite{grimmett1999percolation} to the event $A=B'_{n}$.
\end{rem}
\end{appendix}
\section*{Acknowledgement}
The author thanks Lingfu Zhang for telling the problem to him and having several discussions. The author also thanks Professor Jian Ding for useful comments on writing and thanks Professor Geoffrey Grimmett for reading the early draft and giving helpful suggestions. The author is partially supported by NSF grant  DMS-1757479.
\bibliographystyle{style}
\bibliography{bib}

\begin{thebibliography}{AALR79}

\bibitem[AALR79]{abrahams1979scaling}
E.~Abrahams, P.~W. Anderson, D.~C. Licciardello, and T.~V. Ramakrishnan.
\newblock Scaling theory of localization: {Absence} of quantum diffusion in two
  dimensions.
\newblock {\em Phys. Rev. Lett.}, 42(10):673--677, 1979.

\bibitem[AG91]{aizenman1991strict}
M.~Aizenman and G.~Grimmett.
\newblock Strict monotonicity for critical points in percolation and
  ferromagnetic models.
\newblock {\em J. Stat. Phys.}, 63(5-6):817--835, 1991.

\bibitem[BK05]{bourgain2005localization}
J.~Bourgain and C.~Kenig.
\newblock On localization in the continuous {Anderson-Bernoulli} model in
  higher dimension.
\newblock {\em Invent. Math.}, 161(2):389--426, 2005.

\bibitem[BOC03]{beamond2003quantum}
E.~J. Beamond, A.~L. Owczarek, and J.~Cardy.
\newblock Quantum and classical localization and the {Manhattan lattice}.
\newblock {\em J. Phys. A}, 36(41):10251, 2003.

\bibitem[Car10]{cardy2010quantum}
J.~Cardy.
\newblock Quantum network models and classical localization problems.
\newblock {\em Int. J. Mod. Phys. B}, 24(12n13):1989--2014, 2010.

\bibitem[CKM87]{carmona1987anderson}
R.~Carmona, A.~Klein, and F.~Martinelli.
\newblock Anderson localization for {Bernoulli} and other singular potentials.
\newblock {\em Comm. Math. Phys.}, 108(1):41--66, 1987.

\bibitem[DS20]{ding2020localization}
J.~Ding and C.~Smart.
\newblock Localization near the edge for the {Anderson Bernoulli} model on the
  two dimensional lattice.
\newblock {\em Invent. Math.}, 219(2):467--506, 2020.

\bibitem[Gri99]{grimmett1999percolation}
G.~Grimmett.
\newblock {\em Percolation}, volume 321.
\newblock Springer Berlin Heidelberg, 2nd edition, 1999.

\bibitem[Li20]{li2020anderson}
L.~Li.
\newblock {Anderson-Bernoulli} localization with large disorder on the {2D}
  lattice.
\newblock 2020, arXiv:2002.11580.

\bibitem[Spe12]{spencer2012duality}
T.~Spencer.
\newblock Duality, statistical mechanics and random matrices.
\newblock In {\em {Current Developments in Mathematics}}, pages 229--260.
  International Press, Somerville, 2012.

\end{thebibliography}

\end{document}